\def\tp{\ensuremath{\hdots}}
\def\infini{\ensuremath{\infty}}
\def\E{\ensuremath{\mathbb{E}}}
\def\N{\ensuremath{\mathbb{N}}}
\def\P{\ensuremath{\mathbb{P}}}
\def\R{\ensuremath{\mathbb{R}}}
\def\Z{\ensuremath{\mathbb{Z}}}
\def\aM{\text{argmax}}
\newcommand{\psh}[2]{\ensuremath{\left\langle #1,#2\right\rangle}}
\definecolor{qqffqq}{rgb}{0.,1.,0.}
\definecolor{ttqqqq}{rgb}{0.2,0.,0.}
\definecolor{ffqqqq}{rgb}{1.,0.,0.}
\definecolor{qqqqff}{rgb}{0.,0.,1.}
\definecolor{xdxdff}{rgb}{0.49019607843137253,0.49019607843137253,1.}
\definecolor{cqcqcq}{rgb}{0.7529411764705882,0.7529411764705882,0.7529411764705882}
\newtheorem{theorem}{Theorem} 
\newtheorem{proposition}{Proposition} 
\title{Inconsistency of Template Estimation with the Fréchet mean in Quotient Space}
\author{Loïc Devilliers\footnote{Université Côte d’Azur, Inria, France \href{loic.devilliers@inria.fr}{loic.devilliers@inria.fr}} \and Xavier Pennec\footnote{Université Côte d’Azur, Inria, France} \and Stéphanie Allassonnière\footnote{Université Paris Descartes, INSERM UMRS 1138, Centre de Recherche des Cordeliers, France}}
\begin{document}

\maketitle

\begin{abstract}
We tackle the problem of template estimation when data have been randomly transformed under an isometric group action in the presence of noise. In order to estimate the template, one often minimizes the variance when the influence of the transformations have been removed (computation of the Fréchet mean in quotient space). 
The consistency bias is defined as the distance (possibly zero) between the orbit of the template and the orbit of one element which minimizes the variance.
In this article we establish an asymptotic behavior of the consistency bias with respect to the noise level. This behavior is linear with respect to the noise level. As a result the inconsistency is unavoidable as soon as the noise is large enough. In practice, the template estimation with a finite sample is often done with an algorithm called max-max. We show the convergence of this algorithm to an empirical Karcher mean.
Finally, our numerical experiments show that the bias observed in practice cannot be attributed to the small sample size or to a convergence problem but is indeed due to the previously studied inconsistency.

\end{abstract}
\newpage
\tableofcontents

\section{Introduction}
The template estimation is a well known issue in different fields such as statistics on signals~\cite{kur}, shape theory, computational anatomy~\cite{gui,jos,coo} etc. In these fields, the template (which can be viewed as the prototype of our data) can be (according to different vocabulary) shifted, transformed, wrapped or deformed due to different groups acting on data. Moreover, due to a limited precision in the measurement, the presence of noise is almost always unavoidable. These mixed effects on data lead us to study the consistency of algorithms which claim to compute the template. A popular algorithm consists in the minimization of the variance, in other words, the computation of the Fréchet mean in quotient space. This method has been already proved to be inconsistent~\cite{big,mio,dev}. One way to avoid the inconsistency is to use another framework, for a instance a Bayesian paradigm~\cite{che}. However, if one does not want to change the paradigm, then one needs to have a better understanding of the geometrical and statistical origins of the inconsistency.

\textbf{Notation:} in this paper, we suppose that observations belong to a Hilbert space $(H,\: \psh{\cdot}{\cdot})$, we denote by $\|\cdot\|$ the norm associated to the dot product $\psh{\cdot}{\cdot}$. We also consider a group of transformation $G$ which acts isometrically on $H$ the space of observations. This means that $x\mapsto g\cdot x$ is a linear automorphism of $H$, such that\footnote{Note that in this article, $g\cdot x$ is the result of the action of $g$ on $x$, and $\cdot$ should not to be confused with the multiplication of real numbers noted $\times$.} $\|g\cdot x\|=\|x\|$, $g'\cdot (g\cdot x)=(g'g)\cdot x$ and $e\cdot x=x$ for all $x\in H$, $g,\: g'\in G$, where $e$ is the identity element of $G$. 

\textbf{The generative model} is the following: we transform an unknown template~$t_0\in H$ with $\phi$ a random and unknown element of the group $G$ and we add some noise $\sigma\epsilon$ with a positive noise level $\sigma$, $\epsilon$ a standardized noise: $\E(\epsilon)=0$, $\E(\|\epsilon\|^2)=1$. Moreover we suppose that $\epsilon$ and $\phi$ are independent random variables. Finally, the only observable random variable is:
\begin{equation}Y=\phi\cdot t_0+\sigma\epsilon\label{modgen}.\end{equation} 
If we assume that the noise is independent and identically distributed on each pixel or voxel with a standard deviation $s$, then $\sigma=\sqrt{N}s$, where $N$ is the number of pixels/voxels.

\textbf{Quotient space and Fréchet mean:} the random transformation of the template by the group leads us to project the observation $Y$ into the quotient space defined as the set containing all the orbit $[x]=\{g\cdot x,\: g\in G\}$ for $x\in H$. Because the action is isometric, the quotient space $H/G$ is equipped with a pseudometric\footnote{$d_Q$ is called a pseudometric because $d_Q([x],[y])$ can be equal to zero even if $[x]\neq [y]$. If the orbits are closed sets then $d_Q$ is a distance.} defined by:
\begin{equation*}
d_Q([x],[y])=\underset{g\in G}{\inf} \|x-g\cdot y\|=\underset{g\in G}{\inf} \|g\cdot x-y\|.
\end{equation*}
The quotient pseudometric is the distance between $x$ and $y'$ where $y'$ is the registration of $y$ with respect to $x$.
We define the variance of the random orbit~$[Y]$ as the expectation of the square pseudometric between the random orbit~$[Y]$ and the orbit of a point~$x$ in $H$:
\begin{equation}
F(x)=\E(d_Q^2([x],[Y]))=\E(\inf_{g\in G}\|x-g\cdot Y\|^2)=\E(\inf_{g\in G}\|g\cdot x-Y\|^2).
\label{var}
\end{equation}
Note that $F(x)$ is well defined for all $x\in H$ because $\E(\|Y\|^2)$ is finite. In order to estimate the template, one often minimizes this function. If $m_\star\in H$ minimizes $F$, then $[m_\star]$ is called a Fréchet mean of $[Y]$. The consistency bias, noted $CB$, is the pseudometric between the orbit of the template $[t_0]$ and $[m_\star]$: $CB=d_Q([t_0],[m_\star])$. If such a $m_\star$ does not exist, then the consistency bias is infinite.

\textbf{Questions:} 
\begin{itemize}
\item What is the behavior of the consistency with respect to the noise?
\item How to perform such a minimization of the variance? Indeed, in practice we have only a sample and not the whole distribution.
\end{itemize}

\textbf{Contribution:} in this article, we provide a Taylor expansion of the consistency bias when the noise level $\sigma$ tends to infinity. As we do not have the whole distribution, we minimize the empirical variance given a sample. An element which minimizes the variance is called an empirical Fréchet mean. We already know that the empirical Fréchet mean converges to the Fréchet mean when the sample size tends to infinity~\cite{zie}. Therefore our problem is reduced to find an empirical Fréchet mean with a finite but sufficiently large sample. One algorithm called the max-max algorithm~\cite{all} aims to compute such an empirical Fréchet mean.
We establish some properties of the convergence of this algorithm. In particular, when the group is finite, the algorithm converges in a finite number of steps to an empirical Karcher mean (a local minimum of the empirical variance given a sample). This helps us to illustrate the inconsistency in this very simple framework. 

Of course, generally people use a subgroup of diffeomorphisms which acts non isometrically on data such that images, landmarks etc. We believe that studying the inconsistency in this simplified framework will help us to better understand more complex situations. Moreover it is also possible to define and use isometric actions on curves~\cite{hit,kur} or on surfaces~\cite{kur2} where our work can be directly applied. 

This article is organized as follows: in Section~\ref{sec:inc}, we study the presence of the inconsistency and we establish the asymptotic behavior when the noise parameter $\sigma$ tends to $\infini$. In Section~\ref{sec:max} we detail the max-max algorithm and its properties. Finally, in Section~\ref{sec:sim} we illustrate the inconsistency with synthetic data.
\section{Inconsistency of the Template Estimation}
\label{sec:inc}
We start with the main theorem of this article which gives us an asymptotic behavior of the consistency bias when the noise level $\sigma$ tends to infinity. One key notion in Theorem~\ref{theo1} is the concept of fixed point under the action $G$: a point $x\in H$ is a fixed point if for all $g\in G, \: g\cdot x=x$. We require that the support of the noise $\epsilon$ is not included in the set of fixed points. But this condition is almost always fulfilled. For instance in $\R^n$ the set of fixed points under a linear group action is a null set for the Lebesgue measure (unless the action is trivial $g\cdot x=x$ for all $g\in G$ but this situation is irrelevant).  

\begin{theorem}
\label{theo1}
Let us suppose that the support of the noise $\epsilon$ is not included in the set of fixed points under the group action. Let $Y$ be the observable variable defined in Equation~\eqref{modgen}. If the Fréchet mean of $[Y]$ exists, then we have the following lower and upper bounds of the consistency bias noted $CB$:
\begin{equation}
\sigma K-2\|t_0\|\leq CB\leq \sigma K+2\|t_0\|,
\label{ineg}
\end{equation}
where $K=\underset{\|v\|=1}{\sup} \E\left( \underset{g\in G} \sup \psh{g\cdot v}{\epsilon}\right)$ is a constant which depends only on the standardised noise and on the group action. We have $K\in (0,1]$.
The consistency bias has the following asymptotic behavior when the noise level $\sigma$ tends to infinity:
\begin{equation}
CB=\sigma K+o(\sigma) \mbox{ as } \sigma\to +\infini.
\label{equivalent}
\end{equation}

\end{theorem}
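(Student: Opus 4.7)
The plan is to exploit the $1$-homogeneity of the natural objective induced by the isometric action, reducing the minimization of $F$ to a one-dimensional problem, and then to compare the norm of the resulting Fréchet mean with $\sigma K$.

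First, I would expand, using $\|g\cdot x\|=\|x\|$:
\[
F(x)=\|x\|^2+\E(\|Y\|^2)-2H(x),\qquad H(x):=\E\Bigl(\sup_{g\in G}\psh{g\cdot x}{Y}\Bigr).
\]
Linearity of the action makes $H$ positively homogeneous of degree one, so writing any nonzero candidate as $x=\lambda v$ with $\|v\|=1$ and $\lambda\ge 0$, I would get $F(\lambda v)=\lambda^2+\E(\|Y\|^2)-2\lambda H(v)$, a quadratic in $\lambda$ whose minimum is attained at $\lambda=H(v)$ when $H(v)\ge 0$. The existence of a Fréchet mean $m_\star$ then forces $\|m_\star\|=\sup_{\|v\|=1}H(v)$: taking $v_\star=m_\star/\|m_\star\|$, the one-dimensional analysis along the ray through $m_\star$ yields $\|m_\star\|=H(v_\star)$, while comparing $F(m_\star)$ to $F(H(v)v)$ forces $H(v_\star)\ge H(v)$ for every unit $v$. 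The problem therefore reduces to controlling this scalar.

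Next I would sandwich $H(v)$ by $\sigma K(v)\pm\|t_0\|$, where $K(v):=\E(\sup_g\psh{g\cdot v}{\epsilon})$. Plugging $Y=\phi\cdot t_0+\sigma\epsilon$ into the definition and using the Cauchy-Schwarz bound $|\psh{g\cdot v}{\phi\cdot t_0}|\le\|t_0\|$ (deterministic, for every $g$ and every realization of $\phi$), together with the elementary inequalities
\[
\sup_g A(g)+\inf_g B(g)\le \sup_g\bigl(A(g)+B(g)\bigr)\le \sup_g A(g)+\sup_g B(g),
\]
I would obtain $\sigma K(v)-\|t_0\|\le H(v)\le \sigma K(v)+\|t_0\|$ after taking expectation. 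Passing to the supremum over unit $v$ then gives the key norm estimate $\bigl|\|m_\star\|-\sigma K\bigr|\le \|t_0\|$.

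To conclude, I would expand
\[
d_Q^2([t_0],[m_\star])=\|t_0\|^2+\|m_\star\|^2-2\sup_g\psh{g\cdot m_\star}{t_0}
\]
and bound the last term by $\pm\|m_\star\|\|t_0\|$ via Cauchy-Schwarz to get $\bigl|\|m_\star\|-\|t_0\|\bigr|\le CB\le \|m_\star\|+\|t_0\|$. Combining with the preceding norm estimate yields~\eqref{ineg} (the lower bound being trivial when $\sigma K\le 2\|t_0\|$), and~\eqref{equivalent} follows by dividing by $\sigma$ and letting $\sigma\to\infty$. For the range of $K$, the upper bound $K\le 1$ is Cauchy-Schwarz and Jensen ($\E(\sup_g\psh{g\cdot v}{\epsilon})\le \E(\|\epsilon\|)\le 1$); strict positivity $K>0$ uses the support hypothesis: were $K(v)=0$ for every unit $v$, then since $\sup_g\psh{g\cdot v}{\epsilon}\ge\psh{v}{\epsilon}$ and $\E(\psh{v}{\epsilon})=0$, equality would hold almost surely, and using that $G$ is closed under inverses and acts orthogonally would force $g\cdot\epsilon=\epsilon$ a.s.\ for every $g$, contradicting the hypothesis. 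The main subtlety is the reduction to the one-dimensional problem --- in infinite dimensions a compactness argument on the unit sphere is unavailable --- but since existence of $m_\star$ is assumed, attainment of the relevant supremum comes for free.
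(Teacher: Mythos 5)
Your proof is correct and follows essentially the same route as the paper's: the quadratic expansion of $F$ along rays $\lambda v$, the identity $\|m_\star\|=\sup_{\|v\|=1}H(v)$, the sandwich $\sigma K(v)\pm\|t_0\|$ via Cauchy--Schwarz, and the triangle inequality $\bigl|\|m_\star\|-\|t_0\|\bigr|\le CB\le\|m_\star\|+\|t_0\|$. The only divergence is the proof of $K>0$: you argue by contradiction that $K=0$ would force $g\cdot\epsilon=\epsilon$ almost surely for every $g$ (a valid argument, though passing from $\psh{v}{g\cdot\epsilon-\epsilon}=0$ a.s.\ for each $v$ to $g\cdot\epsilon=\epsilon$ a.s.\ silently uses a countable dense family of directions), whereas the paper exhibits an explicit direction $v_0=g_0\cdot x/\|x\|$ built from a non-fixed point $x$ in the support together with a ball of positive probability; both are sound.
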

It follows from Equation~\eqref{ineg} that $K$ is the consistency bias with a null template $t_0=0$ and a standardised noise $\sigma=1$.
We can ensure the presence of inconsistency as soon as the signal to noise ratio verifies $\frac{\|t_0\|}{\sigma}<\frac{K}{2}$. Moreover, if the signal to noise ratio verifies $\frac{\|t_0\|}{\sigma}<\frac{K}{3}$ then the consistency bias verifies $CB\geq  \|t_0\|$. In other words, the Fréchet mean in quotient space is too far from the template: the template estimation with the Fréchet mean in quotient space is useless in this case.
In~\cite{dev} the authors also give lower and upper bounds as a function of $\sigma$ but these bounds are less informative than our current bounds. Indeed, in~\cite{dev} the lower bound goes to zero when the template becomes closed to fixed points. This may suggest that the consistency bias was small for this kind of template, which is not the case.
The proof of Theorem~\ref{theo1} is postponed in Appendix~\ref{sec:proof}, the sketch of the proof is the following:
\begin{itemize} \item $K>0$ because the support of $\epsilon$ is not included in the set of fixed points under the action of $G$.
\item $K\leq 1$ is the consequence of the Cauchy-Schwarz inequality.
\item The proof of Inequalities~\eqref{ineg} is based on the triangular inequalities:
\begin{equation}
\|m_\star\|-\|t_0\|\leq CB=\inf_{g\in G}\| t_0-g\cdot m_\star\|\leq \|t_0\|+\|m_\star\|,
\label{inegtri}
\end{equation}
where $m_\star$ minimizes \eqref{var}: having a piece of information about the norm of $m_\star$ is enough to deduce a piece of information about the consistency bias. 
\item The asymptotic Taylor expansion of the consistency bias~\eqref{equivalent} is the direct consequence of inequalities~\eqref{ineg}.
\end{itemize}

Note that Theorem~\ref{theo1} is absolutely not a contradiction with~\cite{kur} where the authors proved the consistency of the template estimation with the Fréchet mean in quotient space for all $\sigma>0$. Indeed their noise was included in the set of constant functions which are the fixed points under their group action. 

One disadvantage of Theorem~\ref{theo1} is that it ensures the presence of inconsistency for $\sigma$ large enough but it says nothing when $\sigma$ is small, in this case one can refer to~\cite{mio} or~\cite{dev}.

\section{Template estimation with the Max-Max Algorithm}
\label{sec:max}
\subsection{Max-Max Algorithm Converges to a Local Minima of the Empirical Variance}
Section~\ref{sec:inc} can be roughly understood as follows: if we want to estimate the template by minimising the Fréchet mean with quotient space then there is a bias. This supposes that we are able to compute such a Fréchet mean. In practice, we cannot minimise the exact variance in quotient space, because we have only a finite sample and not the whole distribution. In this section we study the estimation of the empirical Fréchet mean with the max-max algorithm. We suppose that the group is finite. Indeed, in this case, the registration can always be found by an exhaustive search. In a compact group acting continuously, the registration also exists but is not necessarily computable without approximation. Hence, the numeric experiments which we conduct in Section~\ref{sec:sim} lead to an empirical Karcher mean in a finite number of steps.

If we have a sample: $Y_1,\tp, Y_I$ of independent and identically distributed copies of $Y$, then we define the empirical variance in the quotient space:
\begin{equation}
  F_I(x)=\frac1I \sum_{i=1}^I d^2_Q([x],[Y_i])=\frac{1}{I} \sum_{i=1}^I \underset{g_i\in G}{\min} \|x-g_i\cdot Y_i\|^2=\frac{1}{I} \sum_{i=1}^I \underset{g_i\in G}{\min} \|g_i\cdot x- Y_i\|^2.
    \label{empv}
\end{equation}
The empirical variance is an approximation of the variance, indeed thanks to the law of large number we have $\lim_{I\to \infini} F_I(x)=F(x)$ for all $x\in H$. One element which minimizes globally (respectively locally) $F_I$ is called an empirical Fréchet mean (respectively an empirical Karcher mean). For $x\in H$ and $\underline g\in G^I$: $\underline g=(g_1,\tp, g_I)$ where $g_i\in G$ for all $i\in 1..I$ we define $J$ an auxiliary function by:
\begin{equation*}
 J(x, \underline g)=\frac1I \underset{i=1}{\overset{I}{\sum}} \|x-g_i\cdot Y_i\|^2=
 \frac1I \underset{i=1}{\overset{I}{\sum}} \|g_i^{-1}\cdot x- Y_i\|^2.
\end{equation*}
The max-max algorithms iteratively minimizes the function $J$ in the variable $x\in H$ and in the variable $\underline g\in G^I$:
\begin{algorithm}
\caption{Max-Max algorithm}
\begin{algorithmic}
\REQUIRE A starting point $m_0\in H$, a sample $Y_1,\tp,Y_I$.
\STATE $n=0$.
\WHILE{Convergence is not reached}
\STATE Minimizing $\underline g\in G^I\mapsto J(m_n,\underline g)$: we get $g_i^{n}$ by registering $Y_i$ with respect to $m_n$.
\STATE Minimizing $x\in H\mapsto J(x,\underline g^n)$: we get $m_{n+1}=\frac1I \sum_{i=1}^I g_i^n \cdot Y_i$.
\STATE $n=n+1$.
\ENDWHILE
\STATE $\hat m=m_{n}$
\end{algorithmic}
\end{algorithm}

Note that the empirical variance does not increase at each step of the algorithm since: 
$
    F_I(m_n)=J(m_n,\underline g^n)\geq J(m_{n+1},\underline g^n)\geq J(m_{n+1},\underline g^{n+1})=F_I(m_{n+1})
$.
This algorithm is sensitive to the the starting point. However we remark that $m_1=\frac{1}{I}\sum_{i=1}^I g_i\cdot Y_i$ for some $g_i\in G$, then without loss of generality, we can start from $m_1=\frac{1}{I} \sum_{i=1}^I g_i\cdot Y_i$ for some $g_i\in G$.

\begin{proposition}
As the group is finite, the convergence is reached in a finite number of steps.
\end{proposition}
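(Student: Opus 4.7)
The plan is to exploit two structural facts about the iterates: after one step they lie in a finite set, and the objective value $F_I(m_n)$ is monotone. The combination forces stabilization after finitely many iterations.

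First I would observe that, as noted just before the statement, for every $n\geq 1$ the iterate $m_n$ has the form $\frac{1}{I}\sum_{i=1}^I h_i\cdot Y_i$ for some tuple $(h_1,\dots,h_I)\in G^I$. Since $G$ is finite, there are at most $|G|^I$ such tuples, so the set
\begin{equation*}
S=\left\{\tfrac{1}{I}\sum_{i=1}^I h_i\cdot Y_i \;:\; (h_1,\dots,h_I)\in G^I\right\}\subset H
\end{equation*}
is finite, and $m_n\in S$ for all $n\geq 1$. In particular, the image set $\{F_I(m_n):n\geq 1\}$ is contained in the finite set $\{F_I(s):s\in S\}\subset\R$.

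Next I would use the monotonicity inequality already recorded in the excerpt,
\begin{equation*}
F_I(m_n)=J(m_n,\underline g^n)\geq J(m_{n+1},\underline g^n)\geq J(m_{n+1},\underline g^{n+1})=F_I(m_{n+1}),
\end{equation*}
to conclude that $(F_I(m_n))_{n\geq 1}$ is a non-increasing sequence taking values in a finite subset of $\R$. Any such sequence is eventually constant: there exists $N$ such that $F_I(m_n)=F_I(m_N)$ for all $n\geq N$.

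Finally I would promote ``eventually constant objective'' to ``eventually constant iterate''. Once $F_I(m_n)=F_I(m_{n+1})$, the chain of inequalities above forces $J(m_n,\underline g^n)=J(m_{n+1},\underline g^n)$; but the map $x\mapsto J(x,\underline g^n)$ is strictly convex (a sum of squared-norm terms in a Hilbert space), so its minimizer is unique and equal to $m_{n+1}=\frac{1}{I}\sum_{i=1}^I g_i^n\cdot Y_i$. Hence $m_n=m_{n+1}$, and the same tuple $\underline g^{n+1}$ can be chosen equal to $\underline g^n$, yielding termination in at most $N$ steps. The only place that requires care is this last step: showing that equality of consecutive objective values genuinely forces equality of the iterates (rather than merely cycling through distinct points in $S$ with the same $F_I$-value), which is exactly where the strict convexity of $J(\cdot,\underline g^n)$ is used.
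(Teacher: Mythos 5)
Your proof is correct and follows essentially the same route as the paper's: the iterates lie in the finite set $\{\frac1I\sum_i g_i\cdot Y_i : g_i\in G\}$, the non-increasing objective sequence is therefore stationary, and equality of consecutive values combined with the uniqueness of the minimizer of $x\mapsto J(x,\underline g^n)$ forces $m_n=m_{n+1}$. Your explicit appeal to strict convexity just makes precise the uniqueness step the paper states without elaboration.
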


\begin{proof}
The sequence $(F_I(m_n))_{n\in \N}$ is non-increasing. Moreover the sequence $(m_n)_{n\in \N}$ takes value in a finite set which is:
$
\{\frac1I \sum_{i=1}^I g_i\cdot Y_i,\: g_i\in G\}.
$
Therefore, the sequence $(F_I(m_n))_{n\in\N}$ is stationary. Let $n\in \N$ such that $F_I(m_n)=F_I(m_{n+1})$. 
Hence the empirical variance did not decrease between step $n$ and step $n+1$ and we have:
\begin{equation*}
F_I(m_n)=J(m_n,\underline g_n)=J(m_{n+1},\underline g_n)=J(m_{n+1},\underline{g}_{n+1})=F_I(m_{n+1}),
\end{equation*}
as $m_{n}$ is the unique element which minimizes $m\mapsto J(m,\underline{g}_{n})$ we conclude that $m_{n+1}=m_n$. 
\end{proof}

\begin{figure}[htb]
    \centering
   \begin{tikzpicture}[line cap=round,line join=round,>=triangle 45,x=1cm,y=1cm,scale=0.7]
\draw [color=cqcqcq,, xstep=1.0cm,ystep=1.0cm] (-0.7852537722908061,-1.1625514403292634) grid (8.874746227709274,8.277448559670814);
\draw[->,ultra thick,color=black] (-0.7852537722908061,0.) -- (8.874746227709274,0.);
\foreach \x in {,1.,2.,3.,4.,5.,6.,7.,8.}
\draw[shift={(\x,0)},color=black] (0pt,-2pt);
\draw[color=black] (8.55474622770927,0.08) node [anchor=south west] {$x$};
\draw[->,ultra thick,color=black] (0.,-1.1625514403292634) -- (0.,8.277448559670814);
\foreach \y in {-1.,1.,2.,3.,4.,5.,6.,7.,8.}
\draw[shift={(0,\y)},color=black] (2pt,0pt) -- (-2pt,0pt);
\draw[color=black] (0.1,7.83744855967081) node [anchor=west] {$\underline g$};
\clip(-0.7852537722908061,-1.1625514403292634) rectangle (8.874746227709274,8.277448559670814);
\draw [line width=1.2pt,color=ffqqqq] (1.,0.)-- (1.,2.);
\draw [line width=1.2pt,color=ffqqqq] (1.,1.15) -- (1.18,1.);
\draw [line width=1.2pt,color=ffqqqq] (1.,1.15) -- (0.82,1.);
\draw [dash pattern=on 5pt off 5pt] (1.,2.)-- (0.,2.);
\draw [dash pattern=on 5pt off 5pt] (3.,2.)-- (3.,0.);
\draw [line width=1.2pt,color=ffqqqq] (1.,2.)-- (3.,2.);
\draw [line width=1.2pt,color=ffqqqq] (2.135,2.) -- (2.,1.835);
\draw [line width=1.2pt,color=ffqqqq] (2.135,2.) -- (2.,2.165);
\draw [dash pattern=on 5pt off 5pt] (3.,3.)-- (0.,3.);
\draw [dash pattern=on 5pt off 5pt] (6.524746227709255,6.497448559670791)-- (0.,6.48);
\draw [line width=1.2pt,color=qqffqq,domain=-0.7852537722908061:8.874746227709274] plot(\x,{(--49.-0.*\x)/7.});
\draw [line width=1.2pt,color=qqffqq] (7.,-1.1625514403292634) -- (7.,8.277448559670814);
\draw [line width=2.pt,color=ffqqqq] (6.504746227709255,6.497395075415429)-- (6.524746227709255,6.497448559670791);
\draw [dash pattern=on 5pt off 5pt] (6.524746227709255,6.497448559670791)-- (6.504746227709255,0.);
\draw [line width=1.2pt,color=ffqqqq] (3.,2.)-- (3.,3.);
\draw [line width=1.2pt,color=ffqqqq] (3.,2.65) -- (3.18,2.5);
\draw [line width=1.2pt,color=ffqqqq] (3.,2.65) -- (2.82,2.5);
\draw [line width=1.2pt,color=ffqqqq] (6.524746227709255,6.497448559670791)-- (7.,6.497448559670791);
\draw [line width=1.2pt,color=ffqqqq] (6.882373113854629,6.49744855967079) -- (6.762373113854628,6.347448559670789);
\draw [line width=1.2pt,color=ffqqqq] (6.882373113854629,6.49744855967079) -- (6.762373113854628,6.647448559670791);
\draw [line width=1.2pt,color=ffqqqq] (7.,6.497448559670791)-- (7.,7.);
\draw [line width=1.2pt,color=ffqqqq] (7.,6.883724279835396) -- (7.165,6.748724279835395);
\draw [line width=1.2pt,color=ffqqqq] (7.,6.883724279835396) -- (6.835,6.748724279835395);
\begin{scriptsize}
\draw [fill=xdxdff] (1.,0.) circle (2.5pt);

\draw [fill=qqqqff] (1.,2.) circle (2.5pt);

\draw [fill=xdxdff] (0.,2.) circle (2.5pt);

\draw [fill=qqqqff] (3.,2.) circle (2.5pt);

\draw [fill=xdxdff] (3.,0.) circle (2.5pt);

\draw [fill=qqqqff] (3.,3.) circle (2.5pt);

\draw [fill=xdxdff] (0.,3.) circle (2.5pt);

\draw [fill=qqqqff] (6.524746227709255,6.497448559670791) circle (2.5pt);

\draw [fill=xdxdff] (7.,0.) circle (2.5pt);

\draw [fill=xdxdff] (0.,7.) circle (2.5pt);
\draw [fill=qqqqff] (7.,7.) circle (2.5pt);

\draw [fill=xdxdff] (4.,0.) circle (2.5pt);

\draw [fill=xdxdff] (6.504746227709255,0.) circle (2.5pt);

\draw [fill=xdxdff] (0.,6.48) circle (2.5pt);

\draw [fill=ttqqqq] (4.5,-0.34) circle (1.0pt);
\draw [fill=ttqqqq] (4.8,-0.34) circle (1.0pt);
\draw [fill=ttqqqq] (5.1,-0.34) circle (1.0pt);
\draw [fill=qqqqff] (6.504746227709255,6.497395075415429) circle (2.5pt);
\draw [fill=ttqqqq] (-0.35,5.3) circle (1.0pt);
\draw [fill=ttqqqq] (-0.35,5.) circle (1.0pt);
\draw [fill=ttqqqq] (-0.35,4.7) circle (1.0pt);
\draw [fill=qqqqff] (7.,6.497448559670791) circle (2.5pt);

\draw[color=xdxdff] (0.7,-0.2) node {$m_0$};
\draw[color=xdxdff] (2.7,-0.2) node {$m_1$};
\draw[color=xdxdff] (3.7,-0.2) node {$m_2$};
\draw[color=xdxdff] (6.4,-0.2) node {$m_{n-1}$};
\draw[color=xdxdff] (7.4,-0.2) node {$m_n$};

\draw[color=xdxdff] (-0.3,2.3) node {$\underline g^0$};
\draw[color=xdxdff] (-0.3,3.3) node {$\underline g^1$};
\draw[color=xdxdff] (-0.21,6.5) node {$\underline g^{n-1}$};
\draw[color=xdxdff] (-0.3,7.3) node {$\underline g^n$};

\draw[color=qqqqff] (1,2.3) node {$J(m_0,\underline g^0)$};
\draw[color=qqqqff] (3,1.7) node {$J(m_1,\underline g^0)$};
\draw[color=qqqqff] (3,3.3) node {$J(m_1,\underline g^1)$};

\draw[color=qqqqff] (5,6.0) node {$J(m_{n-1},\underline{g}^{n-1})$};
\draw[color=qqqqff] (7.8,6.0) node {$J(m_n,\underline g^{n-1})$};
\draw[color=qqqqff] (8,7.4) node {$J(m_n,\underline g^n)$};

\end{scriptsize}
\end{tikzpicture}
    \caption{Iterative minimization of the function $J$ on the two axis, the horizontal axis represents the variable in the space $H$, the vertical axis represents the set of all the possible registrations $G^I$. Once the convergence is reached, the point $(m_n,g_n)$ is the minimum of the function $J$ on the two axis in green. Is this point the minimum of $J$ on its whole domain? There are two pitfalls: firstly this point could be a saddle point, it can be avoided with Proposition~\ref{prop:regu}, secondly this point could be a local (but not global), this is discussed in Subsection~\ref{subsec:local}.}
    \label{fig:my_label}
\end{figure}
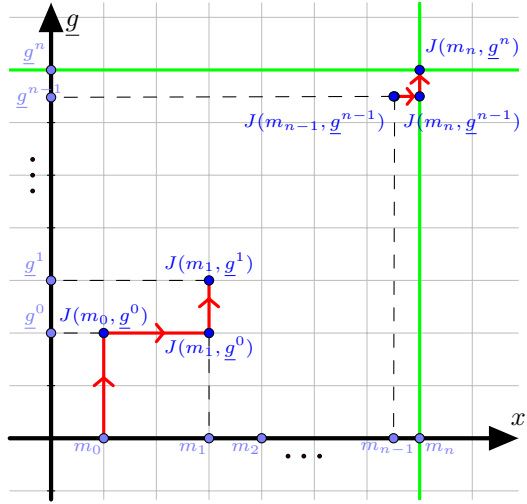

This proposition gives us a shutoff parameter in the max-max algorithm: we stop the algorithm as soon as $m_n=m_{n+1}$.
Let call $\hat m$ the final result of the max-max algorithm. It may seem logical that $\hat m$ is at least a local minimum of the empirical variance. However this intuition may be wrong: let us give a simple counterexample (but not necessarily realistic), suppose that we observe $Y_1,\tp,Y_I$, due to the transformation of the group it is possible that $\sum_{i=1}^n Y_i=0$. We can start from $m_1=0$ in the max-max algorithm, as $Y_i$ and $0$ are already registered, the max-max algorithm does not transform $Y_i$. At step two, we still have $m_2=0$, by induction the max-max algorithm stays at $0$ even if $0$ is not a Fréchet or Karcher mean of $[Y]$. Because $0$ is equally distant from all the points in the orbit of $Y_i$, $0$ is called a focal point of $[Y_i]$. The notion of focal point is important for the consistency of the Fréchet mean in manifold~\cite{bha}. Fortunately, the situation where $\hat m$ is not a Karcher mean is almost always avoided due to the following statement:
\begin{proposition}
\label{prop:regu}
Let $\hat{m}$ be the result of the max-max algorithm. If the registration of $Y_i$ with respect to $\hat{m}$ is unique, in other words, if $\hat m$ is not a focal point of $Y_i$ for all $i\in 1..I$ then
$\hat{m}$ is a local minimum of $F_I$: $[\hat{m}]$ is an empirical Karcher mean of $[Y]$.
\end{proposition}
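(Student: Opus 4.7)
The plan is to show that in a neighborhood of $\hat m$, the empirical variance $F_I$ coincides with the quadratic function $x\mapsto J(x,\underline g)$ for a fixed choice of registrations $\underline g$, which is minimized exactly at $\hat m$.

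First I would use the stopping criterion of the algorithm: at termination we have $m_{n+1}=m_n=\hat m$, so there exists $\underline g=(g_1,\tp,g_I)\in G^I$ with $g_i$ realizing the registration of $Y_i$ onto $\hat m$, and moreover $\hat m=\frac1I\sum_{i=1}^I g_i\cdot Y_i$ (the minimizer of the quadratic map $x\mapsto J(x,\underline g)$ in the Hilbert space $H$).

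Next, I would exploit the uniqueness hypothesis together with the finiteness of $G$. Uniqueness of the registration of $Y_i$ with respect to $\hat m$ means that for every $g\in G\setminus\{g_i\}$,
\begin{equation*}
\|\hat m-g_i\cdot Y_i\|^2 < \|\hat m-g\cdot Y_i\|^2.
\end{equation*}
Because $G$ is finite, there are only finitely many such strict inequalities in total. The squared-norm maps $x\mapsto\|x-g\cdot Y_i\|^2$ are continuous, so each strict inequality survives on an open neighborhood of $\hat m$; taking the intersection over $i\in 1..I$ and $g\in G\setminus\{g_i\}$, I obtain an open neighborhood $U$ of $\hat m$ on which $g_i$ remains the unique minimizer of $g\mapsto\|x-g\cdot Y_i\|^2$ for every $i$.

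Consequently, for every $x\in U$,
\begin{equation*}
F_I(x)=\frac1I\sum_{i=1}^I\|x-g_i\cdot Y_i\|^2 = J(x,\underline g),
\end{equation*}
with the same $\underline g$ as at termination. Since $J(\cdot,\underline g)$ is a (strictly convex) quadratic function on $H$ minimized globally at $\frac1I\sum_i g_i\cdot Y_i=\hat m$, we conclude that $F_I(x)\geq F_I(\hat m)$ for all $x\in U$, i.e.\ $\hat m$ is a local minimum of $F_I$ and $[\hat m]$ is an empirical Karcher mean of $[Y]$. The only delicate point is the continuity/finiteness argument ensuring that the optimal registrations $\underline g$ are locally constant around $\hat m$; this is precisely where the no-focal-point hypothesis is used, and the finiteness of $G$ makes it immediate without any further compactness or transversality input.
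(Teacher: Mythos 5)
Your proposal is correct and follows essentially the same route as the paper's proof: use the no-focal-point hypothesis plus finiteness of $G$ to make the optimal registrations locally constant around $\hat m$, so that $F_I$ coincides with the quadratic $J(\cdot,\underline g)$ near $\hat m$, which is minimized at $\hat m=\frac1I\sum_{i=1}^I g_i\cdot Y_i$ by the stopping criterion. You are merely a bit more explicit than the paper in spelling out why $\hat m$ is the minimizer of $J(\cdot,\underline g)$.
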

Note that, if we call $z$ the registration of $y$ with respect to $m$, then the registration is unique if and only if $\psh{m}{z-g\cdot z}\neq 0$ for all $g\in G\setminus\{e\}$. Once the max-max algorithm has reached convergence, it suffices to test this condition for $\hat{m}$ obtained by the max-max algorithm and for $Y_i$ for all $i$. This condition is in fact generic and is always obtained in practice.
\begin{proof}
We call $g_i$ the unique element in $G$ which register $Y_i$ with respect to $\hat m$, for all $h\in G\setminus\{g_i\}$, $\|\hat m-g_i\cdot Y_i\|<\|\hat m-h_i\cdot Y_i\|$. By continuity of the norm we have for $a$ close enough to $m$: $\|a-g_i\cdot Y_i\|<\|a-h_i\cdot Y_i\|$ for all $h_i \neq g_i$ (note that this argument requires a finite group). The registrations of $Y_i$ with respect to $m$ and to $a$ are the same: 
\begin{equation*}
    F_I(a)=\frac1I \sum_{i=1}^I \|a-g_i\cdot Y_i\|^2=J(a,\underline g)\geq J(\hat{m},\underline g)=F_I(\hat{m}),
\end{equation*} 
because $m\mapsto J(m,\underline g)$ has one unique local minimum $\hat{m}$.
\end{proof}

\subsection{Max-Max Algorithm is a Gradient Descent of the Variance}
In this Subsection, we see that the max-max algorithm is in fact a gradient descent. The gradient descent is a general method to find the minimum of a differentiable function. Here we are interested in the minimum of the variance $F$: let $m_0\in H$ and we define by induction the gradient descent of the variance $m_{n+1}=m_n-\rho \nabla F(m_n)$, where $\rho>0$ and $F$ the variance in the quotient space. In~\cite{dev} the gradient of the variance in quotient space for $m$ a regular point was computed ($m$ is regular as soon as $g\cdot m=m$ implies $g=e$), this leads to:
\begin{equation*}
    m_{n+1}=m_n-2\rho\left[m_n-\E(g(Y,m_n)\cdot Y)\right],
\end{equation*}
where $g(Y,m_n)$ is the almost-surely unique element of the group which register $Y$ with respect to $m_n$. Now if we have a set of data $Y_1,\tp, Y_n$ we can approximated the expectation which leads to the following approximated gradient descent:
\begin{equation*}
    m_{n+1}= m_n(1-2\rho) +\rho \frac2I \sum_{i=1}^I g(Y_i,m_n)\cdot Y_i,
\end{equation*}
now by taking $\rho=\frac12$ we get $m_{n+1}= \frac1I \sum_{i=1}^I g(Y_i,m_n)\cdot Y_i$. So the approximated gradient descent with $\rho=\frac12$ is exactly the max-max algorithm. But the max-max algorithm is proven to be converging in a finite number of steps which is not the case for gradient descent in general.

\section{Simulation on synthetic data}
\label{sec:sim}

In this Section\footnote{The code used in this Section is available at \href{http://loic.devilliers.free.fr/ipmi.html}{http://loic.devilliers.free.fr/ipmi.html}.}, we consider data in an Euclidean space $\R^N$ equipped with its canonical dot product $\psh{\cdot }{\cdot}$, and $G=\Z/\N\Z$ acts on $\R^N$ by circular permutation on coordinates:
\begin{equation*}
(\bar{k}\in \Z/N\Z, (x_1,\tp,x_N)\in \R^N)\mapsto (x_{1+k},x_{2+k},\tp x_{N+k}),
\end{equation*}
where indexes are taken modulo $N$. This space models the discretization of functions with $N$ points. This action is found in~\cite{all} and used for neuroelectric signals in~\cite{hit}. The registration between two vectors can be made by an exhaustive research but it is faster with the fast Fourier transform~\cite{fft}.  

\subsection{Max-max algorithm with a step function as template}

\begin{figure}[!htb]
\centering
\subfigure[Example of a template (a step function) and the template estimation with a sample size $10^5$ in $\R^{64}$, $\epsilon$ is Gaussian noise and $\sigma=10$. At the discontinuity points of the template, we observe a Gibbs-like 
    phenomena.]{\includegraphics[width=5.6cm]{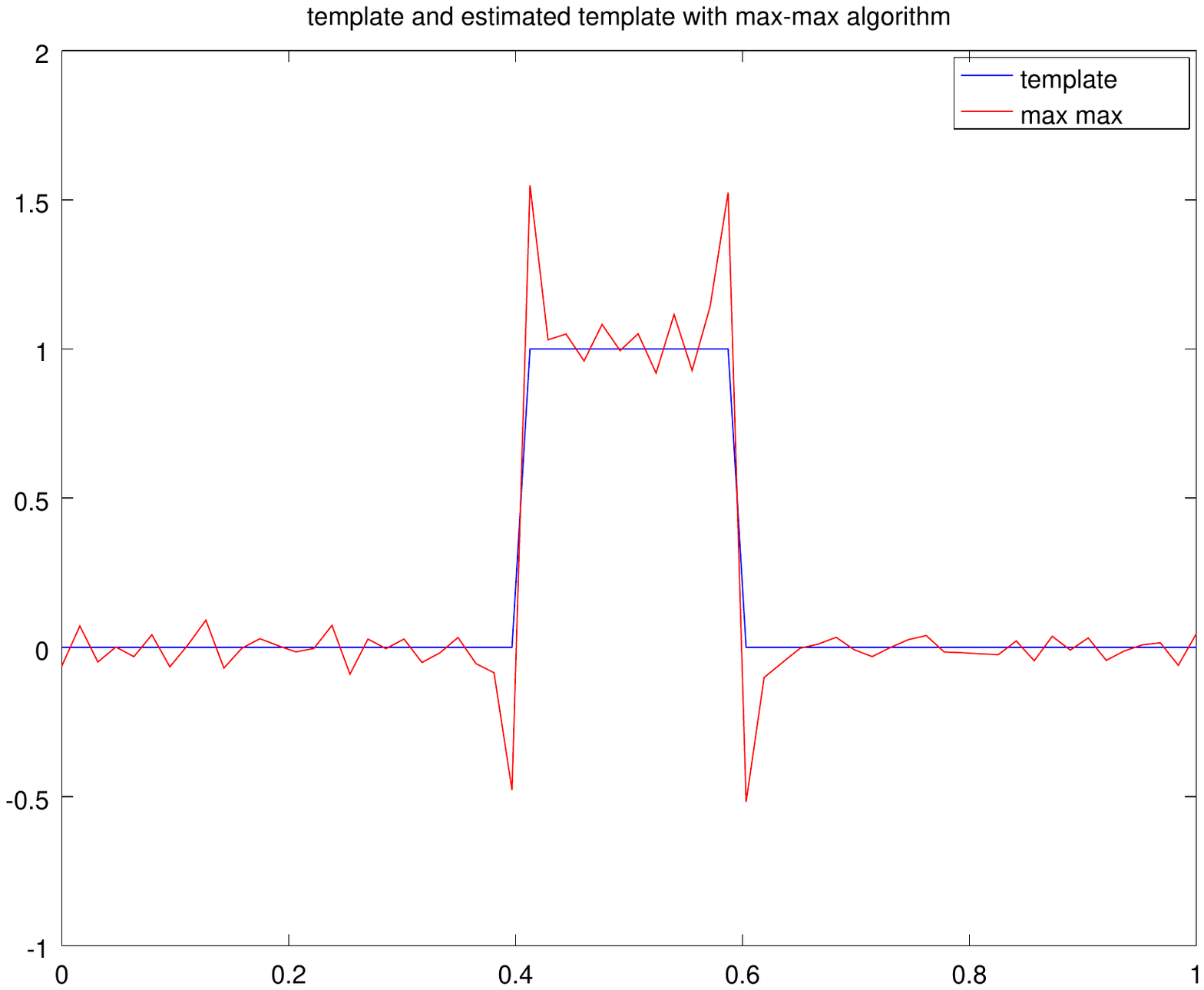} \label{fig:bias}}
\quad
\subfigure[Variation of $F_I(t_0)$ (in blue) and of $F_I(\hat m)$ (in red) as a function of $I$ the size of the sample. 
Since convergence is already reached, $F(\hat m)$, which is the limit of red curve, is below $F(t_0)$: $F(t_0)$ is the limit of the blue curve. Due to the inconsistency, $\hat m$ is an example of point such that $F(\hat m)<F(t_0)$. ]{\includegraphics[width=5.8cm]{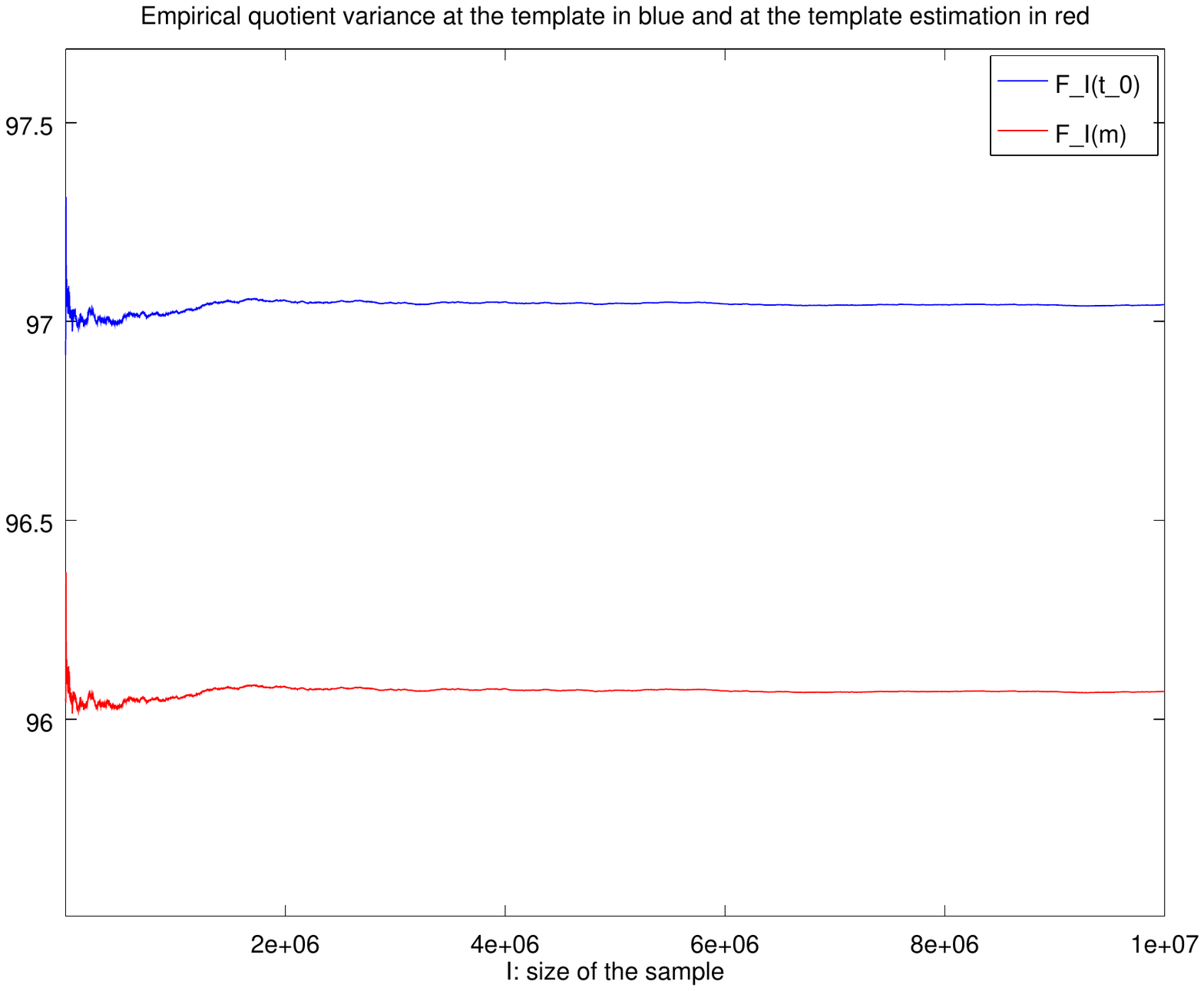}\label{fig:variance}}
\caption{Template $t_0$ and template estimation $\hat m$ on Fig.~\ref{fig:bias}. Empirical variance at the template and the template estimation with the max-max algorithm as a function of the size of the sample on Fig.~\ref{fig:variance}.}

\end{figure}
We display an example of a template and the template estimation with the max-max algorithm on Fig~\ref{fig:bias}.
Note that this experiment was already conducted in~\cite{all}. But no explanation of the appearance of the bias was provided.
On the opposite, we know from the precedent Section that the max-max result is an empirical Karcher mean, and that this result can be obtained in a finite number of steps. Taking $\sigma=10$ may seem extremely high, however the standard deviation of the noise at each point is not $10$ but $\frac{\sigma}{\sqrt N}=1.25$ which is not so high.

The sample size is $10^5$, and the algorithm stopped after 94 steps, and $\hat m$ the estimated template (in red on the Fig.~\ref{fig:bias}) is not a focal points of the orbits $[Y_i]$, then Proposition~\ref{prop:regu} applies. We call empirical bias (noted EB) the quotient distance between the true template and the point $\hat m$ given by the max-max result. On this experiment we have $\frac{EB}{\sigma}\simeq 0.11$. Of course, one could think that we estimate the template with an empirical bias due to a too small sample size which induces fluctuation. To reply to this objection, we keep in memory~$\hat{m}$ obtained with the max-max algorithm. If there was no inconsistency then we would have $F(t_0)\leq F(\hat{m})$. We do not know the value of the variance $F$ at these points, but thanks to the law of large number, we know that:
\begin{equation*}
F(t_0)=\underset{I\to \infini}{\lim} F_I(t_0) \mbox{ and } F(\hat{m})=\underset{I\to \infini}{\lim} F_I(\hat{m}),
\end{equation*}
Given a sample, we compute $F_I(t_0)$ and $F_I(\hat{m})$ thanks to the definition of the empirical variance $F_I$~\eqref{empv}. We display the result on Fig.~\ref{fig:variance}, this tends to confirm that $F(t_0)>F(\hat{m})$. In other words, the variance at the template is bigger that the variance at the point given by the max-max algorithm. 

\subsection{Max-max algorithm with a continuous template}
\begin{figure}[htb]
    \centering
    \includegraphics[scale=0.35]{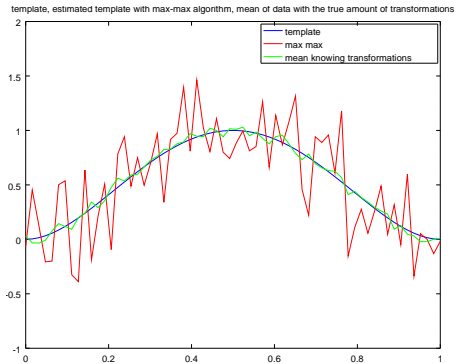}
    \caption{Example of an other template (here a discretization of a continuous function) and the template estimation with a sample size $10^3$ in $\R^{64}$ (in red), $\epsilon$ is Gaussian noise and $\sigma=10$. Even with a continuous function the inconsistency appears. In green we compute the mean of data with the true amount of transformations.}
    \label{fig:bias3}
\end{figure}
Figure~\ref{fig:bias} shows that the main source of the inconsistency was the discontinuity of the template. We could think that a continuous template leads to consistency. But it is not the case, even with a large number of observations created from a continuous template we do not observe a convergence to the template see Fig.~\ref{fig:bias3}, the empirical bias satisfies $\frac{EB}{\sigma}=0.25$. If we knew the original transformations we could invert the transformations on data and take the mean, that is what we deed in green on Fig.~\ref{fig:bias3}. We see that with a sample size $10^3$, the mean gives us almost the good result since we have in that case $\frac{EB}{\sigma}=0.03$.

\subsection{Does the max-max algorithm give us a global minimum or only a local minimum of the variance?}
\label{subsec:local}
Proposition~\ref{prop:regu} tells us that the output of the max-max algorithm is a Karcher mean of the variance, but we do not know that if it is Fréchet mean of the variance. In other words, is the output a global minimum of the variance? 
In fact, $F_I$ has a lot of local minima which are not global. Indeed we can use the max-max algorithm with different starting points and we observe different outputs (which are all local minima thanks to Proposition~\ref{prop:regu}) with different empirical variance (result non shown).

\section{Discussion and Conclusion}

We provided an asymptotic behavior of the consistency bias when the noise level $\sigma$ tends to infinity, as a consequence, the inconsistency cannot be neglected when $\sigma$ is large. However we have not answered this question: can the inconsistency be neglected? When the noise level is small enough, then the consistency bias is small~\cite{mio,dev}, hence it can be neglected. Note that the quotient space is not a manifold, this prevents us to use \textit{a priori} the Central Limit theorem for manifold proved in~\cite{bha}. But if the Central Limit theorem could be applied to quotient space, the fluctuations induce an error which would be approximately equal to $\frac{\sigma}{\sqrt I}$ and if $K\ll\frac{1}{\sqrt I}$, then the inconsistency could be neglected because it is small compared to fluctuation. 

If the Hilbert Space is a functional space, for instance $L^2([0,1])$, in practice, we never observe the whole function, only a finite number values of this function. One can model these observable values on a grid. When the resolution of the grid goes to zero, one can show the consistency~\cite{pan} by using the Fréchet mean with the Wasserstein distance on the space of measures rather than in the space of functions. But in (medical) images the number of pixels or voxels is finite.

Finally, in a future work one needs to study the template estimation with non isometric action. But we can already learn from this work: in the numerical experiments we led, we have seen that the template estimated is more detailed that the true template. The intuition is that the estimated template in computational anatomy with a group of diffeomorphisms is also more detailed. But the true template is almost always unknown. It is then possible that one think that the computation of the template succeeded to capture small details of the template while it is just an artifact due to the inconsistency. Moreover in order to tackle this question, one needs to have a good modelisation of the noise, for instance in~\cite{kur}, the observations are curves, what is a relevant noise in the space of curves?

\bibliographystyle{alpha}
\bibliography{bi}

\newcommand{\etalchar}[1]{$^{#1}$}
\begin{thebibliography}{CMT{\etalchar{+}}04}

\bibitem[AAT07]{all}
St{\'e}phanie Allassonni{\`e}re, Yali Amit, and Alain Trouv{\'e}.
\newblock Towards a coherent statistical framework for dense deformable
  template estimation.
\newblock {\em Journal of the Royal Statistical Society: Series B (Statistical
  Methodology)}, 69(1):3--29, 2007.

\bibitem[BC11]{big}
J{\'e}r{\'e}mie Bigot and Benjamin Charlier.
\newblock On the consistency of fr{\'e}chet means in deformable models for
  curve and image analysis.
\newblock {\em Electronic Journal of Statistics}, 5:1054--1089, 2011.

\bibitem[BP03]{bha}
Rabi Bhattacharya and Vic Patrangenaru.
\newblock Large sample theory of intrinsic and extrinsic sample means on
  manifolds. i.
\newblock {\em Annals of statistics}, pages 1--29, 2003.

\bibitem[CDH16]{che}
Wen Cheng, Ian~L Dryden, and Xianzheng Huang.
\newblock Bayesian registration of functions and curves.
\newblock {\em Bayesian Analysis}, 11(2):447--475, 2016.

\bibitem[CMT{\etalchar{+}}04]{coo}
Timothy~F Cootes, Stephen Marsland, Carole~J Twining, Kate Smith, and
  Christopher~J Taylor.
\newblock Groupwise diffeomorphic non-rigid registration for automatic model
  building.
\newblock In {\em European conference on computer vision}, pages 316--327.
  Springer, 2004.

\bibitem[CT65]{fft}
James~W Cooley and John~W Tukey.
\newblock An algorithm for the machine calculation of complex fourier series.
\newblock {\em Mathematics of computation}, 19(90):297--301, 1965.

\bibitem[DATP16]{dev}
L.~{Devilliers}, S.~{Allassonni{\`e}re}, A.~{Trouv{\'e}}, and X.~{Pennec}.
\newblock {Template estimation in computational anatomy: Fréchet means in top
  and quotient spaces are not consistent}.
\newblock {\em ArXiv e-prints}, August 2016.

\bibitem[GMT00]{gui}
A.~Guimond, J.~Meunier, and J.-P. Thirion.
\newblock Average brain models: A convergence study.
\newblock {\em Computer Vision and Image Understanding}, 77(2):192--210, 2000.

\bibitem[HCG{\etalchar{+}}13]{hit}
Sebastian Hitziger, Maureen Clerc, Alexandre Gramfort, Sandrine Saillet,
  Christian B{\'e}nar, and Th{\'e}odore Papadopoulo.
\newblock Jitter-adaptive dictionary learning-application to multi-trial
  neuroelectric signals.
\newblock {\em arXiv preprint arXiv:1301.3611}, 2013.

\bibitem[JDJG04]{jos}
Sarang Joshi, Brad Davis, Matthieu Jomier, and Guido Gerig.
\newblock Unbiased diffeomorphic atlas construction for computational anatomy.
\newblock {\em NeuroImage}, 23:S151--S160, 2004.

\bibitem[KKD{\etalchar{+}}11]{kur2}
Sebastian Kurtek, Eric Klassen, Zhaohua Ding, Malcolm~J Avison, and Anuj
  Srivastava.
\newblock Parameterization-invariant shape statistics and probabilistic
  classification of anatomical surfaces.
\newblock In {\em Biennial International Conference on Information Processing
  in Medical Imaging}, pages 147--158. Springer, 2011.

\bibitem[KSW11]{kur}
Sebastian~A Kurtek, Anuj Srivastava, and Wei Wu.
\newblock Signal estimation under random time-warpings and nonlinear signal
  alignment.
\newblock In {\em Advances in Neural Information Processing Systems}, pages
  675--683, 2011.

\bibitem[MHP16]{mio}
Nina Miolane, Susan Holmes, and Xavier Pennec.
\newblock Template shape estimation: correcting an asymptotic bias.
\newblock {\em arXiv preprint arXiv:1610.01502}, 2016.

\bibitem[PZ16]{pan}
Victor~M Panaretos and Yoav Zemel.
\newblock Amplitude and phase variation of point processes.
\newblock {\em The Annals of Statistics}, 44(2):771--812, 2016.

\bibitem[Zie77]{zie}
Herbert Ziezold.
\newblock On expected figures and a strong law of large numbers for random
  elements in quasi-metric spaces.
\newblock In {\em Transactions of the Seventh Prague Conference on Information
  Theory, Statistical Decision Functions, Random Processes and of the 1974
  European Meeting of Statisticians}, pages 591--602. Springer, 1977.

\end{thebibliography}

\appendix
\section{Proof of Theorem~\ref{theo1}}
\label{sec:proof}

\begin{proof}
In the proof, we note by $S$ the unit sphere in $H$. In order to prove that $K>0$, we take $x$ in the support of $\epsilon$ such that $x$ is not a fixed point under the action of $G$. It exists $g_0\in G$ such that $g_0\cdot x\neq x$. We note $v_0=\frac{g_0\cdot x}{\|x\|}\in S$, we have $\psh{v_0}{g_0\cdot x}=\|x\|>\psh{v_0}{x}$ and by continuity of the dot product it exists $r>0$ such that: $
\forall y\in B(x,r)\quad \psh{v_0}{g_0\cdot y}>\psh{v_0}{y}
$ as $x$ is in the support of $\epsilon$ we have $\P(\epsilon \in B(x,r))>0$, it follows:
\begin{equation}
\P\left( \underset{g\in G}{\sup} \psh{v_0}{g\cdot \epsilon}>\psh{v_0}{\epsilon}\right)>0.
\label{inegv0}
\end{equation}
Thanks to Inequality~\eqref{inegv0} and the fact that $\sup_{g\in G} \psh{v_0}{g\cdot \epsilon}\geq \psh{v_0}{\epsilon}$ we have:
\begin{equation*}
K=\underset{v\in S}{\sup}\: \E\left(\underset{g\in G}{\sup} \psh{v}{g \cdot \epsilon}\right)\geq \E\left(\underset{g\in G}{\sup} \psh{v_0}{g \cdot \epsilon}\right)>\E(\psh{v_0}{\epsilon})=\psh{v_0}{\E(\epsilon)}=0.
\end{equation*}
Using the Cauchy-Schwarz inequality:
$K\leq \sup_{v\in S} \E(\|v\|\times \|\epsilon\|)\leq \E(\| \epsilon\|^2)^{\frac12}=1$.
We now prove Inequalities~\eqref{ineg}. The variance at $\lambda v$ for $v\in S$ and $\lambda\geq 0$ is:
\begin{equation}
F(\lambda v)=\E\left( \underset{g\in G}{\inf} \|\lambda v-g \cdot Y\|^2\right)=\lambda^2-2\lambda \E\left( \underset{g\in G}{\sup} \psh{v}{g \cdot Y}\right) +\E(\|Y\|^2).
 \label{expand}
\end{equation}
Indeed $\|g\cdot Y\|=\|Y\|$ thanks to the isometric action. We note $x^+=\max(x,0)$ the positive part of $x$ and  $h(v)=\E(\sup_{g\in G}\psh{v}{g \cdot Y})$. The $\lambda\geq 0$ which\footnote{Indeed we know that $x\in \R^+\mapsto x^2-2bx+c$ reaches its minimum at the point $x=b^+$ and $f(b^+)=c-(b^+)^2$.} minimizes~\eqref{expand} is $h(v)^+ $
and the minimum value of the variance restricted to the half line $\R^+v$ is $F(h(v)^+ v)=\E(\|Y\|^2)- (h(v)^+)^2$.
To find $[m_\star]$ the Fréchet mean of~$[Y]$, we need to maximize $(h(v)^+)^2$ with respect to $v\in S$: $m_\star=h(v_\star)v_\star$ with\footnote{Note that we remove the positive part and the square because $\aM \: h=\aM \: (h^+)^2$ since $h$ takes a non negative value (indeed $h(v)\geq \E(\psh{v}{\phi\cdot t_0+\epsilon})=\psh{v}{\E(\phi\cdot t_0)}$ and this last quantity is non negative for at least one $v\in S$).} $v_\star\in \aM_{v\in S} \:h(v)$. As we said in the sketch of the proof we are interested in getting a piece of information about the norm of $\|m_\star\|$ we have:
$
\|m_\star\|=h(v_\star)=\sup_{v\in S} h.
$
Let $v\in S$, we have: $-\|t_0\|\leq \psh{v}{g\phi\cdot t_0}\leq  \|t_0\|$ because the action is isometric. Now we decompose $Y=\phi\cdot t_0+\sigma \epsilon$ and we get:
\begin{eqnarray*}
h(v)&=&\E\left(\underset{g\in G}{\sup} \psh{v}{g\cdot Y}\right)=
\E\left(\underset{g\in G}{\sup} \left(\psh{v}{g\cdot \sigma\epsilon}+\psh{v}{g\phi \cdot t_0 }\right)\right)\\
h(v)&\leq& \E\left(\underset{g\in G}{\sup} \left(\psh{v}{g \cdot \sigma\epsilon}+\|t_0\|\right) \right)  = \sigma\E\left(\underset{g\in G}{\sup} \psh{v}{g \cdot \epsilon}\right) +\|t_0\|\\
h(v)&\geq& \E\left(\underset{g\in G}{\sup} \left(\psh{v}{g\cdot \sigma\epsilon}\right) -\|t_0\|\right)  = \sigma \E\left(\underset{g\in G}{\sup} \psh{v}{g \cdot \epsilon}\right)-\|t_0\|.
\end{eqnarray*}
By taking the biggest value in these inequalities with respect to $v\in S$, by definition of $K$ we get:
\begin{equation}
-\|t_0\| +\sigma K \leq \|m_\star\| \leq \|t_0\|+\sigma K. \label{majthree}
\end{equation}
Thanks to~\eqref{majthree} and to~\eqref{inegtri}, Inequalities~\eqref{ineg} are proved.
\end{proof}

\end{document}